\documentclass[12pt]{article}

\usepackage{iftex}

\ifXeTeX

  \usepackage{fontspec}
  \defaultfontfeatures{Ligatures=TeX}

  \newfontfamily\greekfont{NewCM10-Regular.otf}[
    BoldFont       = NewCM10-Bold.otf,
    ItalicFont     = NewCM10-Italic.otf,
    BoldItalicFont = NewCM10-BoldItalic.otf,
    Script         = Greek,
    Language       = Greek,
    Scale          = MatchLowercase
  ]

  \usepackage{xeCJK}

  \defaultCJKfontfeatures{Scale=MatchLowercase}

  \setCJKsansfont{HaranoAjiGothic-Medium.otf}[
    BoldFont = HaranoAjiGothic-Bold.otf,
  ]

  \newCJKfontfamily\japanesefont{HaranoAjiMincho-Regular.otf}[
    BoldFont = HaranoAjiMincho-Bold.otf,
  ]
  \newCJKfontfamily\chinesefont{FandolSong-Regular.otf}[
    BoldFont = FandolSong-Bold.otf,
  ]

\fi

="[Tempora-Regular.otf]" at 12pt
="[Tempora-Italic.otf]" at 12pt
="[Tempora-Bold.otf]" at 12pt
="[Tempora-BoldItalic.otf]" at 12pt
="[Tempora-Regular.otf]:slant=0.2" at 12pt

\newfontfamily\LMSS   {lmsans10-regular.otf}
\newfontfamily\LMSSB  {lmsans10-bold.otf}
\newfontfamily\LMSSO  {lmsans10-oblique.otf}

\newfontfamily\LMSSDC {lmsansdemicond10-regular.otf}

\newfontfamily\LMSSQ  {lmsansquot8-regular.otf}
\newfontfamily\LMSSQO {lmsansquot8-oblique.otf}

\newcommand{\ssbf}      {\LMSSB\fontsize{12}{14.4}\selectfont}

\usepackage{amsmath}
\usepackage{amssymb}
\usepackage{amsthm}

\usepackage{etoolbox}
\AfterEndEnvironment{proof}{\par\addvspace{\topsep}}
\usepackage{amsfonts}
\usepackage{mathtools}
\usepackage[colorlinks=true,linkcolor=blue,citecolor=blue,urlcolor=blue]{hyperref}
\usepackage{enumitem}
\usepackage{booktabs}
\usepackage{geometry}
\usepackage{graphicx}
\usepackage{wrapfig}
\usepackage{caption}
\usepackage{subcaption}
\usepackage{tikz}
\usetikzlibrary{arrows.meta,calc}

\usepackage{titling}

\pretitle{\par\centering\ssbf}
\posttitle{\par\vskip 0.5em} 

\preauthor{\par\centering\normalsize}
\postauthor{\par\vskip 0em}   

\predate{\par\centering\normalsize}
\postdate{\par}

\newcommand{\affil}[2]{\textsuperscript{#1}#2}

\usepackage{abstract}

\usepackage{titlesec}
\titleformat{\section}
  {\normalfont\normalsize\bfseries} 
  {\thesection.}                   
  {0.5em}                          
  {}                               

\newtheoremstyle{scslnameonly}
  {1.5ex plus 0.2ex minus 0.1ex}
  {1.5ex plus 0.2ex minus 0.1ex}
  {\slshape}
  {}
  {\normalfont}
  {.}
  {0.5em}
  {\thmname{\scshape #1}\thmnumber{ #2}\thmnote{ (#3)}}
  
\newtheoremstyle{scnameonly}
  {1.5ex plus 0.2ex minus 0.1ex}
  {1.5ex plus 0.2ex minus 0.1ex}
  {\normalshape}
  {}
  {\normalfont}
  {.}
  {0.5em}
  {\thmname{\scshape #1}\thmnumber{ #2}\thmnote{ (#3)}}

\theoremstyle{scslnameonly}
\newtheorem{theorem}{Theorem}[section]
\newtheorem{proposition}[theorem]{Proposition}
\newtheorem{lemma}[theorem]{Lemma}

\theoremstyle{scnameonly}

\newcommand{\HH}{\mathbb{H}}
\newcommand{\MM}{\mathbb{M}}

\newcommand{\RR}{\mathbb{R}}
\newcommand{\SSS}{\mathbb{S}}

\newcommand{\dd}{\,\mathrm{d}}

\newcommand{\link}{\mathrel{\mathord{\prec}\mkern-3mu\mathord{\cdot}}}

\usepackage[hang,flushmargin]{footmisc}
\usepackage{xcolor}
\definecolor{BrightDarkGreen}{RGB}{0, 180, 0}
\definecolor{ElectricViolet}{RGB}{140, 20, 252} 

\usepackage{array}
\usepackage{tabularx}

\hypersetup{
  colorlinks=true,
  linkcolor=BrightDarkGreen,
  citecolor=ElectricViolet,
  urlcolor=blue
}

\setlist[itemize]{leftmargin=1.25em,itemsep=0.18em,topsep=0.2em}
\setlist[enumerate]{leftmargin=1.4em,itemsep=0.18em,topsep=0.2em}

\let\oldthebibliography\thebibliography
\renewcommand\thebibliography[1]{%
  \oldthebibliography{#1}%
  \setlength{\itemsep}{0pt}%
  \setlength{\parskip}{0pt}%
}

\title{The Boolean lattice $\mathcal{B}_6$ is not a sphere order}
\author{\normalsize%
  \textsl{David A. Meyer}%
  \\[0.3em] 
  {\normalfont\normalsize
    \affil{}{Department of Mathematics\\
             University of California, San Diego\\
             La Jolla, CA 92093-0112 USA\\
             \texttt{dmeyer@ucsd.edu}
            }%
  }%
}

\date{}

\begin{document}
\maketitle

\begin{abstract}
\noindent Using the equivalence of sphere order representations of 
posets with Minkowski space embeddings, we show that the Boolean 
lattice $\mathcal{B}_6$ is not a sphere order.  This is the first 
explicit finite example of a poset that is not a sphere order.
\end{abstract}


\section{Introduction}\label{sec:Introduction}

A partially ordered set (poset) is a set $P$ with a reflexive, 
antisymmetric, transitive relation $\preccurlyeq$.  For example, the
power set of an $n$ element set $[n]$ is a poset with the containment
relation:  for $A,B\in{\mathcal P}([n]) = [2]^n$,
$$
A\preccurlyeq B \quad\Longleftrightarrow\quad A\subseteq B.
$$
This poset is the \textsl{Boolean lattice} $\mathcal{B}_n$.  We write 
$A\prec B\in P$ if $A\preccurlyeq B$ and $A\ne B$; and we write 
$A\link B$ if $A\prec B$ and there is no element 
$C\in P$ such that $A\prec C\prec B$, \textit{i.e.}, $B$ 
\textsl{covers} $A$ (or $B$ is \textsl{linked} to $A$).

More geometrically, $d$ dimensional Minkowski space $\MM^d$ is the 
vector space $\RR\times\RR^{d-1}$ with the Lorentzian inner product
defined in (time, space) coordinates by
\begin{equation}\label{eq:Lorentzianinnerproduct}
  \langle (t,x),(s,y)\rangle_L = ts - x\cdot y,
\end{equation}
where $\cdot$ is the Euclidean inner product on $\RR^{d-1}$.  When 
$\langle Y-X,Y-X\rangle_L \ge 0$, $X = (t,x)$ and $Y = (s,y)$ are 
\textsl{causally related}; if in addition $t\le s$, then $X$ 
\textsl{causally precedes} $Y$ and we write 
$X\preccurlyeq Y\in\MM^d$.  Thus the points of Minkowski space are
partially ordered.  A natural question within the causal set approach
to a quantum theory of gravity~\cite{BLMS1987} is what is the smallest
dimension for which there is an order isomorphism (an 
\textsl{embedding}) $X : P\mapsto X(P)\subseteq\MM^d$; this is called 
the \textsl{Minkowski dimension} of a poset $P$~\cite{Meyer1989}.

For $d\ge 2$, $X\preccurlyeq Y\in\MM^d$ iff $s-t \ge \|y-x\|$, which 
is equivalent to the closed spherical ball with center $x$ and radius 
$t$ being contained in the closed spherical ball with center $y$ and 
radius $s$, provided $s,t\ge 0$.  Any embedding of a finite poset $P$ 
into $\MM^d$ can be time-translated so that all of the time 
coordinates are positive; thus it is equivalent to a realization of 
$P$ as a collection of closed spherical balls in $\RR^{d-1}$, 
partially ordered by containment, \textit{i.e.}, as a 
\textsl{sphere order}~\cite{BrightwellWinkler1989,Meyer1993}.

Felsner, Fishburn and Trotter have shown that not all finite posets 
are sphere orders, \textsl{in any dimension}~\cite{FFT1999}, but their
proof is Ramsey theoretic and does not exhibit any such poset, nor 
even give any indication of its cardinality (although they are 
``tempt[ed] to conjecture that there is a poset of modest size, say at 
most 100 points, which is not a sphere order'').  Explicit embeddings 
of Boolean lattices into Minkowski 
spaces~\cite{Meyer1989,Daughton1998}, however, have long suggested 
that $\mathcal{B}_6$ is not a sphere order, as conjectured by 
Brightwell and Winkler~\cite{BrightwellWinkler1989}.  Here we prove 
this conjecture, thereby showing that $\mathcal{B}_n$ is a sphere 
order iff $n\le 5$, and also that there is a 62-element poset, 
$\mathcal{B}_6^{\circ} 
 = \mathcal{B}_6^{\vphantom\circ}\setminus\{\emptyset,[6]\}$, 
that is not a sphere order.  El-Zahar and Fateen showed that a
realization of the latter as a sphere order would define a 
configuration of 62 points on $\SSS^4$ with antipodal atoms and 
complementary co-atoms and every maximal chain of total spherical 
length no more than $\pi$, whence they conjectured no such 
configuration exists~\cite{ElZaharFateen1998}.  We use the same 
Minkowski space geometry to prove this conjecture in every finite 
dimension.

\section{Conceptual framework}\label{sec:Conceptualframework}

In Lorentzian geometry the \textsl{closed future cone} is 
$\mathcal{C}^+ = \{(t,x)\in\MM^d\mid t\ge\|x\|\}$.  If there were an 
embedding of $\mathcal{B}_6$, the images of 
$A\subseteq B\subseteq [6]$, $X_A,X_B\in\MM^d$, would have to satisfy 
causal relations of the form $X_B - X_A\in\mathcal{C}^+$.  To show 
that there is no embedding, we need to show that these constraints are 
incompatible with the required non-relations.

Suppose a set of constraints is $\{L_k(X)\ge 0\}$.  Every nonnegative 
linear combination of satisfied inequalities is nonnegative.  A 
\textsl{certificate of infeasibility} instead combines the constraints 
to obtain an expression that must be strictly 
negative~\cite{BoydVandenberghe2004}.  In our case, the constraints 
are of the form that a vector does or does not belong to the cone 
$\mathcal{C}^+$, so the dual variables should be linear functionals 
that have single signs on that cone.  The Lorentzian inner 
product~\eqref{eq:Lorentzianinnerproduct} has this property:  if 
$W = (r,w)$ and $Y = (s,y)$ are in $\mathcal{C}^+$, then
\begin{equation}\label{eq:WYnonneg}
  \langle W,Y\rangle_L = rs - w\cdot y \ge rs - \|w\|\|y\| \ge 0.
\end{equation}
So the first idea of the proof is to choose a future directed causal 
vector $F_{AB}$ for each cover relation (link)
$A\link B$, with the property that 
$\langle F_{AB},X_B - X_A\rangle_L \ge 0$.

The natural setting for this is algebraic topology and discrete 
exterior calculus~\cite{DHLM2005}:  A finite poset $P$ is uniquely 
represented by its Hasse diagram, \textit{i.e.}, its transitive 
reduction to the cover (link) relation.  The links are oriented, so 
this is an oriented simplicial $1$-complex, with $0$-simplices 
$\{[A]\mid A\in P\}$, oriented $1$-simplices
$\{[AB]\mid A\link B\}$, and boundary map $\partial[AB] = [B] - [A]$.

Thus a map $X:P\to\MM^d$ is a vector-valued $0$-cochain, and its 
coboundary, or discrete exterior derivative, is the vector-valued 
$1$-cochain defined by
$$
(\text{d}X)_{AB} = X_B - X_A,
$$
the vectors which must be in $\mathcal{C}^+$ if $X$ is to be an
embedding.

A $1$-chain, on the other hand, is a linear combination 
$$
F = \sum_{[AB]} F_{AB}[AB],
$$
with boundary the $0$-chain defined by
\begin{equation}\label{eq:flux}
(\partial F)_A 
 =
\sum_{C\link A} F_{CA} - \sum_{A\link D} F_{AD}.
\end{equation}

If we take the $F_{AB}\in\MM^d$ we can define a pairing of chains
and cochains~\cite{Alexander1935} (analogous to differential forms and 
their integrals over chains in the smooth setting~\cite{deRham1931}) 
with the Lorentzian inner product~\eqref{eq:Lorentzianinnerproduct}.  
For a vector-valued $1$-cochain $\omega$, write
$$
\langle F,\omega\rangle
 =
\sum_{[AB]}\langle F_{AB},\omega_{AB}\rangle_L,
$$
and define the pairing of $0$-chains and $0$-cochains in the same way.  
In this setting Stokes' theorem is
\begin{equation}\label{eq:Stokestheorem}
  \langle F,\mathrm{d}X\rangle = \langle\partial F,X\rangle,
\end{equation}
so we have
\begin{equation}\label{eq:boundaryterms}
\sum_{A\link B}\langle F_{AB},X_B - X_A\rangle_L
=
\sum_A \langle(\partial F)_A,X_A\rangle_L.
\end{equation}
For any $A$ at which $(\partial F)_A = 0$, the terms which depend 
explicitly on $X_A$ in this sum cancel, so the second idea is to 
choose a \textsl{current} $F$ that is \textsl{conserved}, 
\textit{i.e.}, on internal (\textit{i.e.}, nonextremal) elements $A$, 
the total incoming and total outgoing currents in~\eqref{eq:flux} are 
equal.  

The third idea is to ensure that the non-relations force the remaining 
\textsl{boundary} terms in~\eqref{eq:boundaryterms} to be negative.  
The Lorentzian inner product~\eqref{eq:Lorentzianinnerproduct} also 
has the property that  
$$
X_B - X_A\not\in\mathcal{C}^+
\quad\Longleftrightarrow\quad
\exists V\in\mathcal{C}^+ : \langle V,X_B - X_A\rangle_L < 0.
$$
Thus we want to show that for any putative embedding $X$ we can choose 
$F$ so that
$$
0\le \langle F,\text{d}X\rangle = \langle\partial F,X\rangle
 =
\sum_{\alpha} c_{\alpha}\langle V_{\alpha},
                                X_{B_{\alpha}}-X_{A_{\alpha}}
                        \rangle_L
 < 0,
$$
for some $c_{\alpha} > 0$ and witnesses $V_{\alpha}$ for non-relations 
$(A_{\alpha},B_{\alpha})$, $\alpha\in[k]$, since this contradiction 
would constitute a certificate of infeasibility, as desired.

In the following we develop these three ideas to show that 
$\mathcal{B}_6$ cannot be embedded in any Minkowski space, and 
therefore is not a sphere order.

\section{Lorentzian geometry}\label{sec:Lorentziangeometry}

The conceptual framework described in the previous section depends 
upon phrasing the problem as the non-existence of an embedding of 
$\mathcal{B}_6$ into Minkowski space.  We begin the proof with some 
simple facts about Minkowski geometry.

\begin{lemma}\label{lem:cone}
The cone $\mathcal{C}^+$ is self-dual with respect to the Lorentzian
inner product:
$$
Y\in\mathcal{C}^+
 \quad\Longleftrightarrow\quad
\forall W\in\mathcal{C}^+ : \langle W,Y\rangle_L \ge 0.
$$
If $T$ is future-directed timelike and 
$Y\in\mathcal{C}^+\setminus\{0\}$, then $\langle T,Y\rangle_L > 0$.
Moreover, if $N = (u,n)$ is a future-directed \textrm{null} vector, 
\textit{i.e.}, a nonzero vector in $\mathcal{C}^+$ with $u = \|n\|$, 
and $W\in\mathcal{C}^+$, then
$$
\langle N,W\rangle_L = 0
 \quad\Longleftrightarrow\quad
\exists \lambda\ge 0 : W = \lambda N.
$$
Thus if a sum of future-directed causal vectors is a nonzero null
vector, every nonzero summand lies on the same null ray.
\end{lemma}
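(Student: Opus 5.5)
The whole lemma rests on the Cauchy--Schwarz estimate already used in~\eqref{eq:WYnonneg}, together with a careful look at when it is an equality. For $W=(r,w)$, $Y=(s,y)$ we have
$$\langle W,Y\rangle_L = rs - w\cdot y \ge rs-\|w\|\|y\|.$$
Each part of the lemma comes from tracking when these inequalities are strict.

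\emph{Self-duality.} The forward direction is exactly~\eqref{eq:WYnonneg}. For the converse, suppose $Y=(s,y)\notin\mathcal{C}^+$, so $s<\|y\|$. If $y\ne 0$, take the null vector $W=(1,y/\|y\|)\in\mathcal{C}^+$. Then $\langle W,Y\rangle_L = s-\|y\|<0$. If $y=0$, then $s<0$, and $W=(1,0)$ gives $\langle W,Y\rangle_L=s<0$. This also supplies the witness $V$ used in Section~\ref{sec:Conceptualframework}.

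\emph{Strict positivity with a timelike vector.} Let $T=(r,w)$ with $r>\|w\|$, and let $Y=(s,y)\in\mathcal{C}^+\setminus\{0\}$. Since $Y\neq 0$ and $s\ge\|y\|$, we get $s>0$. Then
$$\langle T,Y\rangle_L\ge rs-\|w\|\|y\| \ge rs-\|w\|s=(r-\|w\|)s>0.$$

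\emph{Orthogonality to a null vector.} Let $N=(u,n)$ with $u=\|n\|>0$, and let $W=(r,w)\in\mathcal{C}^+$. The chain
$$0=\langle N,W\rangle_L = ur-n\cdot w\ \ge\ ur-\|n\|\|w\| = u(r-\|w\|)\ \ge\ 0$$
must be equality throughout.
\begin{itemize}
\item Equality in the last step gives $r=\|w\|$, since $u>0$.
\item Equality in Cauchy--Schwarz gives $n\cdot w=\|n\|\|w\|$, so $w=\mu n$ for some $\mu\ge 0$ (using $n\neq0$).
\item Then $r=\|w\|=\mu\|n\|=\mu u$, so $W=\mu N$.
\end{itemize}
The reverse implication is immediate because $\langle N,N\rangle_L=u^2-\|n\|^2=0$.

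\emph{Final claim.} Suppose $N=\sum_i W_i$ is nonzero null and each $W_i\in\mathcal{C}^+$. Then
$$0=\langle N,N\rangle_L=\sum_i\langle N,W_i\rangle_L.$$
Each term is nonnegative by self-duality, so each term vanishes. By the previous part, $W_i=\lambda_i N$ with $\lambda_i\ge0$.

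The only place needing care is the equality case of Cauchy--Schwarz. There one must use $n\ne0$ and the sign condition to get a nonnegative multiple rather than just collinearity. Everything else is routine.
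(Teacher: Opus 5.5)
Your proof is correct and follows the paper's argument step for step: Cauchy--Schwarz for the forward inclusion, the test vectors $(1,y/\|y\|)$ and $(1,0)$ for the converse, the bound $(r-\|w\|)s>0$ for the timelike case, the equality case of Cauchy--Schwarz for orthogonality to a null vector, and pairing with $N$ for the final claim. The only differences are cosmetic. You phrase the converse of self-duality contrapositively, which directly yields the witness used in Section~\ref{sec:Conceptualframework}. You also state the trivial reverse implication $\langle N,\lambda N\rangle_L=0$, which the paper leaves implicit.
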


\begin{proof}
As we already observed in~\eqref{eq:WYnonneg}, if $Y\in\mathcal{C}^+$, 
then for all $W\in\mathcal{C}^+$, the Cauchy-Schwarz inequality 
implies $\langle W,Y\rangle_L \ge 0$.  Conversely, suppose 
$Y = (s,y)$.  If $y\ne 0$, then $W = (1,y/\|y\|)\in\mathcal{C}^+$, so 
$\langle W,Y\rangle_L = s - \|y\|\ge 0$, whence $Y\in\mathcal{C}^+$.
If $y = 0$, then set $W = (1,0)\in\mathcal{C}^+$, so 
$\langle W,Y\rangle_L = s\ge 0$, whence also $Y\in\mathcal{C}^+$.

If $T = (t,x)$ is future-directed timelike and 
$Y = (s,y)\in\mathcal{C}^+\setminus\{0\}$, then $t > \|x\|$ and 
$s\ge \|y\|$, with $s > 0$.  Hence
$$
\langle T,Y\rangle_L
 =
ts - x\cdot y
 \ge 
ts - \|x\|\|y\|
 \ge 
s(t - \|x\|)
 >
0.
$$

Let $W = (r,w)$.  Then
$$
\langle N,W\rangle_L = ur - n\cdot w \ge u(r - \|w\|)\ge 0,
$$
so equality forces $r = \|w\|$ and $w = (r/u)n$, whence 
$W = \lambda N$, with $\lambda = r/u\ge0$.

Finally, if $\sum W_i = N$ with $W_i\in\mathcal{C}^+$, then 
$\sum \langle N,W_i\rangle_L = 0$.  Thus each of the summands must be 
$0$ and each $W_i = \lambda_i N$ with $\lambda_i \ge 0$, so they all 
lie on the positive ray defined by $N$.
\end{proof}

We have introduced null vectors in Lemma~\ref{lem:cone} because the
witnesses we construct for non-relations will be null vectors, of the
form $V_{\alpha} = (1,v_{\alpha})$ with $\|v_{\alpha}\| = 1$.  The 
positive cone is invariant under orthochronous Lorentz 
transformations, \textit{i.e.}, elements of $\text{O}^+(1,d-1)$, which 
allows us to normalize sets of such witnesses:

\begin{lemma}\label{lem:center}
For $k\ge 2$ and $\alpha\in[k]$, let $V_{\alpha} = (1,v_{\alpha})$ 
where the $v_{\alpha}$ are distinct unit vectors in $\RR^{d-1}$.  
There exist an orthochronous Lorentz transformation $\Lambda$ and 
$\omega_{\alpha} > 0$ such that
$$
V'_{\alpha} = \frac{\Lambda V_{\alpha}}
                   {\omega_{\alpha}} 
            = (1,v'_{\alpha}),
$$
for distinct $v'_{\alpha}$ with $\sum_{\alpha}v'_{\alpha} = 0$.
\end{lemma}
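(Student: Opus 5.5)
The plan is to view the null rays as points of the celestial sphere $\SSS^{d-2}$ and to use the fact that orthochronous Lorentz transformations act on $\SSS^{d-2}$ as conformal (M\"obius) maps. The lemma then says we can find such a map moving the $k$ points $v_\alpha$ so that their Euclidean centroid is the origin. This is the classical ``conformal barycenter'' normalization. Concretely, for the pure boost $\Lambda_b$ with velocity $b\in\RR^{d-1}$, $\|b\|<1$, and any unit $v$, the image $\Lambda_b(1,v)$ is again future null, with time component $\gamma(1+b\cdot v)>0$, where $\gamma=(1-\|b\|^2)^{-1/2}$. Dividing by it gives $(1,\phi_b(v))$ with $\phi_b(v)$ a unit vector. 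So $\omega_\alpha=\gamma(1+b\cdot v_\alpha)$, and distinctness of the $v'_\alpha$ follows because $\Lambda_b$ is injective and distinct null rays stay distinct. It remains to choose $b$ in the open unit ball $B$ with $\Phi(b):=\sum_\alpha \phi_b(v_\alpha)=0$.

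I will prove existence of a zero of $\Phi:B\to\RR^{d-1}$ by a degree/Brouwer argument. First I compute explicitly
$$
\phi_b(v)=\frac{v+\bigl(\gamma^{-1}\dots\bigr)}{\cdots},
$$
or I simply use the qualitative fact: as $b\to p\in\SSS^{d-2}$ radially, $\phi_b(v)\to p$ for every $v\ne -p$, while $\phi_b(-p)=-p$ for $b$ on that ray. (The sign of the velocity is fixed so that this boost pushes points toward $p$; otherwise replace $b$ by $-b$.) Since the $v_\alpha$ are distinct and $k\ge2$, at most one $v_\alpha$ equals $-p$. Hence, as $b\to p$,
$$
\Phi(b)\cdot p\ \to\ (k-1)-1\cdot[\text{some }v_\alpha=-p]\ \ge\ k-2.
$$
For $k\ge3$ this limit is positive. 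The case $k=2$ is handled directly: boost along $v_1-v_2$ to make the two points antipodal. That this works follows from symmetry and the intermediate value theorem along a line.

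The step I expect to be the main obstacle is making this boundary behaviour uniform, so that on a sphere $\|b\|=r$ with $r$ close to $1$ we have $\Phi(b)\cdot b>0$. The convergence $\phi_b(v)\to p$ is not uniform near $v=-p$, and a point $v_\alpha$ near $-b/\|b\|$ might contribute roughly $-1$. Even so, at most one $v_\alpha$ can be within a small angle $\epsilon$ of $-\hat b$, because the points are distinct and we take $\epsilon$ below half their minimum separation. Every other $v_\alpha$ is then uniformly attracted to $\hat b$ by compactness. Once $\Phi(b)\cdot b>0$ holds on $\|b\|=r$, the map $b\mapsto-\Phi(b)$ points inward on the ball of radius $r$. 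So the map $b\mapsto b-t\Phi(b)$ (or a standard Poincar\'e--Miranda/Brouwer lemma: a continuous vector field on a ball with $\Phi(b)\cdot b>0$ on the boundary has a zero) yields $b$ with $\Phi(b)=0$. Setting $\Lambda=\Lambda_b$, possibly composed with a rotation, which is harmless, finishes the proof.
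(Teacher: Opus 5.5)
Your argument for $k\ge3$ is sound, but the $k=2$ step fails as written. A boost with velocity along $u=(v_1-v_2)/\|v_1-v_2\|$ fixes $\pm u$ and moves every other point of $\SSS^{d-2}$ along its great semicircle from $-u$ to $u$. Since $\|v_1\|=\|v_2\|$, you can write $v_1=au+w$ and $v_2=-au+w$ with $w=(v_1+v_2)/2\perp u$ and $a^2+\|w\|^2=1$. So both points lie on the same open semicircle, the one through $w/\|w\|$, and they stay there under every such boost. Two points of an open semicircle are never antipodal, so these boosts cannot work unless $v_1=-v_2$ already. Concretely, with your convention and $b=su$,
\[
\phi_b(v_1)+\phi_b(v_2)=\frac{2}{1-s^2a^2}\Bigl(\frac{w}{\gamma}+s(1-a^2)\,u\Bigr),
\]
whose $w$-component never vanishes. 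The reflection in $u^{\perp}$ that swaps $v_1$ and $v_2$ commutes only with boosts whose velocity lies in $u^{\perp}$. So the symmetry you invoke actually calls for a boost along $v_1+v_2$. Take $b=-(v_1+v_2)/2$, so that $\|b\|=\|w\|<1$ and $\gamma=1/a$. A direct computation then gives $\phi_b(v_1)=u=-\phi_b(v_2)$. This is exactly the paper's $k=2$ choice $H^*\propto V_1+V_2$; it is the \emph{images}, not the boost, that lie along $v_1-v_2$.

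For $k\ge3$ your route differs from the paper's. The paper minimizes $\sum_\alpha\log\langle H,V_\alpha\rangle_L$ over $\HH^{d-1}$. It uses the same observation you use, that at most one $v_\alpha$ lies near any given direction, to get coercivity. It then reads off $\sum_\alpha V_\alpha/\langle H^*,V_\alpha\rangle_L=kH^*$ from the first-order condition. If you parametrize $H$ by your boost velocity, $H=\Lambda_b^{-1}(1,0)$, the zeros of your $\Phi$ are exactly the critical points of that potential. So you solve the same equation, but by a Brouwer argument from $\Phi(b)\cdot b>0$ on a sphere $\|b\|=r$ rather than by minimization. The variational proof needs only that a continuous coercive function attains its minimum, and it identifies the normalizing point as the conformal barycenter. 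Yours needs Brouwer's theorem but no differentiation on the hyperboloid.

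The uniformity you flag as the main obstacle is immediate from the exact formula $\phi_b(v)\cdot\hat b=(c+r)/(1+rc)$, where $r=\|b\|$ and $c=v\cdot\hat b$. If $c\ge-1+\delta$, then $1-\phi_b(v)\cdot\hat b=(1-r)(1-c)/(1+rc)\le 2(1-r)/(r\delta)$. Hence $\Phi(b)\cdot\hat b\ge(k-1)\bigl(1-2(1-r)/(r\delta)\bigr)-1>0$ for $r$ near $1$.
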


\begin{proof}
First consider the case $k\ge 3$.  For 
$H\in\HH^{d-1} = \{H\in\mathcal{C}^+ \mid \langle H,H\rangle_L = 1\}$,
the hyperboloid of future-directed unit timelike vectors, let
$$
\Phi(H) = \sum_{\alpha=1}^k \log\langle H,V_{\alpha}\rangle_L,
$$
which is real since the Lorentzian inner products are all positive.
Write $H = (\cosh q,h\sinh q)$ with $h\in\RR^{d-1}$, $\|h\| = 1$, and
$q\ge 0$.  $\Phi(H)$ has a minimum at some finite value $q$ because 
$h$ can be the same as at most one of the distinct $v_{\alpha}$, and
as $q\to\infty$ all the other terms grow arbitrarily large.  More 
precisely,
$$
\langle H,V_{\alpha}\rangle_L 
 =
\cosh q - h\cdot v_{\alpha}\sinh q
 \ge
\cosh q - \sinh q
 =
e^{-q}
$$
lower bounds the inner product with \textsl{any} of the $V_{\alpha}$.
If $\max_{\alpha\ne\beta}v_{\alpha}\cdot v_{\beta} = \cos\theta < 1$,
then for any $h$, at least $k-1$ of the $v_{\alpha}$ satisfy 
$h\cdot v_{\alpha}\le\cos(\theta/2)$, so for the corresponding 
$V_{\alpha}$,
$$
\langle H,V_{\alpha}\rangle_L 
 =
\frac{e^q}{2}(1 - h\cdot v_{\alpha}) +
\frac{e^{-q}}{2}(1 + h\cdot v_{\alpha})
 \ge
\frac{e^q}{2}\bigl(1 - \cos(\theta/2)\bigr)
 =
Ce^q,
$$
with $C > 0$.  Thus
$$
\Phi(H) \ge \log(e^{-q}) + (k-1)\log (Ce^q)
        = (k-2)q + (k-1)\log C,
$$
which diverges as $q\to\infty$ for $k\ge3$.  Thus $\Phi$ has a 
minimum at some finite $H^*$.

At this minimum, the derivative of $\Phi$ in the direction 
$Z\in (H^*)^{\perp}$ vanishes:
$$
\big\langle Z,\sum_{\alpha=1}^k \frac{V_{\alpha}}
                                     {\langle H^*,V_{\alpha}\rangle_L}
\big\rangle_L = 0,
$$
so the second vector in the inner product must be a multiple of $H^*$.  
Taking the Lorentzian inner product with $H^*$ gives
\begin{equation}\label{eq:gradientiskH}
  \sum_{\alpha=1}^k \frac{V_{\alpha}}
                       {\langle H^*,V_{\alpha}\rangle_L}
  = kH^*.
\end{equation}

In the case $k = 2$, for any $H\in\HH^{d-1}$, let 
$a_{\alpha} = \langle H,V_{\alpha}\rangle_L$ so that 
$V_{\alpha} = a_{\alpha}H + b_{\alpha}$ with 
$b_{\alpha}\in H^{\perp}$.  Since $V_{\alpha}$ is null, 
$\langle b_{\alpha},b_{\alpha}\rangle_L = -a_{\alpha}^2$.  The 
bilinear form $-\langle\cdot,\cdot\rangle_L$ is a positive-definite
inner product on $H^{\perp}$, so the Cauchy-Schwarz inequality gives
$|\langle b_1,b_2\rangle_L|\le a_1a_2$.  Then
$$
0 < \langle V_1,V_2\rangle_L 
  = a_1a_2 + \langle b_1,b_2\rangle_L
  \le 2a_1a_2,
$$
where positivity follows from Lemma~\ref{lem:cone} since $V_1\ne V_2$.
Now,
$$
\Phi(H) = \log\langle H,V_1\rangle_L + \log\langle H,V_2\rangle_L
        = \log(a_1a_2)
        \ge \log(\langle V_1,V_2\rangle_L/2).
$$
This lower bound is achieved at 
$$
H^* = \frac{V_1 + V_2}
           {\sqrt{2\langle V_1,V_2\rangle_L}}
    \in\HH^{d-1},
$$
since $\langle H^*,V_{\alpha}\rangle_L = 
\sqrt{\langle V_1,V_2\rangle_L/2}$; furthermore, 
\eqref{eq:gradientiskH} holds with $k = 2$.

Now, for $k\ge2$, choose an orthochronous Lorentz transformation 
$\Lambda$ such that $\Lambda H^* = (1,0)$ and let 
$\omega_{\alpha} = \langle H^*,V_{\alpha}\rangle_L > 0$.  Since 
Lorentz transformations preserve the inner product, the time 
component of $\Lambda V_{\alpha}$ must be $\omega_{\alpha}$.  Thus 
$V'_{\alpha} = \Lambda V_{\alpha}/\omega_{\alpha} = (1,v'_{\alpha})$,
with $\|v'_{\alpha}\| = 1$, and the $v'_{\alpha}$ are distinct because
$\Lambda$ is invertible.  Finally, applying $\Lambda$ 
to~\eqref{eq:gradientiskH} gives
$$
\sum_{\alpha=1}^k V'_{\alpha} = k(1,0),
$$
so $\sum_{\alpha}v'_{\alpha} = 0$.
\end{proof}

We remark that $H^*$ is a conformal barycenter whose existence was
proved by Douady and Earle~\cite{DouadyEarle1986}.  This proof applies 
to a probability distribution comprised by a finite number of equal 
weight atoms, and is essentially a Lorentzian version of the argument 
of Cantarella and Schumacher~\cite{CantarellaSchumacher2022}.  The 
$k\ge 3$ case is \textsl{stable} in the terminology of Kapovich and 
Millson, while the $k = 2$ case is \textsl{nice semi-stable}~\cite{KapovichMillson1996}.

\section{Embedding constraints for $\mathcal{B}_n$}
\label{sec:EmbeddingconstraintsforBn}

As we described in Section~\ref{sec:Conceptualframework}, an embedding
$X : \mathcal{B}_n \to\MM^d$ must satisfy $X_B - X_A\in\mathcal{C}^+$ 
when $A\subseteq B\subseteq [n]$.  At the same time, if 
$A\not\subseteq B$ we will need a witness $V\in\mathcal{C}^+$ with 
$\langle V,X_B - X_A\rangle_L < 0$ to ensure that 
$X_B - X_A\not\in\mathcal{C}^+$.  The following lemma details the 
witness we will use to show that $\mathcal{B}_6$ is not embeddable 
into any Minkowski space.

\begin{lemma}\label{lem:witnesses}
Let $n\ge 2$.  If there is an embedding $X : \mathcal{B}_n\to\MM^d$, 
with $X_A = (t_A,x_A)$, then there exist distinct unit vectors 
$v'_i\in\RR^{d-1}$, $i\in[n]$ and an embedding 
$X' : \mathcal{B}_n\to\MM^d$ such that $\sum_i v'_i = 0$ and 
$\langle V'_i,X'_B - X'_A\rangle_L < 0$ whenever $i\in A\setminus B$, 
where $V'_i = (1,v'_i)$.
\end{lemma}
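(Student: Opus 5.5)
The plan is to build, for each $i\in[n]$, a single null witness $V_i=(1,v_i)$ that works for every pair $(A,B)$ with $i\in A\setminus B$. I would then normalize these witnesses with Lemma~\ref{lem:center} and transport the embedding by the same Lorentz transformation.

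\textbf{Step 1: reduce to one pair per $i$.} Fix $i$ and let $D_i = X_{[n]\setminus\{i\}} - X_{\{i\}}$. Since $\{i\}\not\subseteq[n]\setminus\{i\}$, we have $D_i\notin\mathcal{C}^+$. Now suppose $i\in A\setminus B$. Then $\{i\}\subseteq A$ and $B\subseteq[n]\setminus\{i\}$, so we can write
$$
X_B - X_A = -(X_{[n]\setminus\{i\}}-X_B) + D_i - (X_A - X_{\{i\}}),
$$
where the two bracketed vectors lie in $\mathcal{C}^+$. For any $V\in\mathcal{C}^+$, Lemma~\ref{lem:cone} then gives
$$
\langle V,X_B-X_A\rangle_L \le \langle V,D_i\rangle_L .
$$
So it suffices to find a null $V_i$ with $\langle V_i,D_i\rangle_L<0$.

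\textbf{Step 2: construct the null witnesses.} Write $D_i=(s,y)$; since $D_i\notin\mathcal{C}^+$ we have $s<\|y\|$. If $y\neq0$, take $v_i = y/\|y\|$, which gives $\langle V_i,D_i\rangle_L = s-\|y\|<0$. If $y=0$, then $s<0$ and any unit $v_i$ works.

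\textbf{Step 3: make the $v_i$ distinct.} The set of unit $v$ with $s - v\cdot y<0$ is a nonempty open subset of the unit sphere $\SSS^{d-2}$. For $d\ge3$ this set is infinite, so I can perturb the $v_i$ one by one to make them distinct while keeping every inequality strict. I expect this to be the only delicate point. When $d=2$ the sphere has just two points, but then $\mathcal{B}_n$ embeds only if its order dimension is at most $2$, i.e.\ $n\le2$. For $n=2$ we get $D_2=-D_1$, both spacelike, so $v_2=-v_1$ are automatically distinct. Alternatively, one can treat $\MM^d\subset\MM^{d+1}$ if raising the dimension is allowed.

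\textbf{Step 4: normalize and transport.} Apply Lemma~\ref{lem:center} with $k=n\ge2$ to get $\Lambda$ and $\omega_i>0$ with $V'_i=\Lambda V_i/\omega_i=(1,v'_i)$, the $v'_i$ distinct, and $\sum_i v'_i=0$. Set $X'_A=\Lambda X_A$. Because $\Lambda$ is orthochronous and linear, it maps $\mathcal{C}^+$ onto itself, so $X'$ is again an embedding. Because $\Lambda$ preserves the Lorentzian inner product,
$$
\langle V'_i,X'_B-X'_A\rangle_L = \omega_i^{-1}\langle V_i,X_B-X_A\rangle_L<0
$$
whenever $i\in A\setminus B$, which is exactly the statement.
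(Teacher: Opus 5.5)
Your proof is correct. Steps 1, 2 and 4 are essentially the paper's argument: the same reduction using $\{i\}\subseteq A$ and $B\subseteq[n]\setminus\{i\}$, the same witness $v_i=y/\|y\|$, and the same transport by $\Lambda$.

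The difference is Step 3, which you flag as the delicate point. In fact distinctness is automatic. Write $A_i=\{i\}$ and $B_i=[n]\setminus\{i\}$. For $i\ne j$ you have $A_j\subseteq B_i$ and $A_i\subseteq B_j$, so
\[
D_i+D_j=(X_{B_i}-X_{A_j})+(X_{B_j}-X_{A_i})\in\mathcal{C}^+ .
\]
By Lemma~\ref{lem:cone}, $\langle V,D_i\rangle_L+\langle V,D_j\rangle_L\ge0$ for every $V\in\mathcal{C}^+$. So no single causal vector witnesses both $D_i$ and $D_j$. The admissible witness sets for different $i$ are therefore pairwise disjoint, and any choice of witnesses, including your canonical one from Step 2, is distinct. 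This is how the paper argues: one line, uniform in the dimension, with no perturbation.

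Your route is also valid: perturb within nonempty open subsets of $\SSS^{d-2}$ when $d\ge3$, and for $d=2$ use that $\MM^2$ is the product order on $\RR^2$, so only $n\le2$ occurs and then $D_2=-D_1$. But it imports the order dimension of $\mathcal{B}_n$ and a case split that the lemma does not need.

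Two minor points:
\begin{itemize}
\item The case $y=0$ in Step 2 cannot occur. It would put $D_i$ or $-D_i$ in $\mathcal{C}^+$, contradicting the incomparability of $A_i$ and $B_i$; you used only one direction of that incomparability.
\item Passing to $\MM^{d+1}$ would not prove the lemma as stated, since it asks for $X'$ in the same $\MM^d$. Your main $d=2$ argument does not rely on this.
\end{itemize}
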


\begin{proof}
Let $A_i = \{i\}$ and $B_i = [n]\setminus\{i\}$.  Since $n\ge 2$,
$A_i$ and $B_i$ are incomparable, and since $X$ is an embedding, 
neither $X_{B_i} - X_{A_i}$ nor $X_{A_i} - X_{B_i}$ is in 
$\mathcal{C}^+$, so $x_{B_i}\ne x_{A_i}$.  Let
$$
v_i = \frac{x_{B_i} - x_{A_i}}
           {\|x_{B_i} - x_{A_i}\|}.
$$
Then
\begin{equation}\label{eq:witnessneg}
\langle V_i,X_{B_i} - X_{A_i}\rangle_L
 =
t_{B_i} - t_{A_i} - \|x_{B_i} - x_{A_i}\| 
 <
0.
\end{equation}
These witnesses are distinct:  if $V_i = V_j = V$ for $i\ne j$, then
the strict inequality~\eqref{eq:witnessneg} gives
$$
0 > \langle V,X_{B_i} - X_{A_i}\rangle_L + 
    \langle V,X_{B_j} - X_{A_j}\rangle_L
  = \langle V,X_{B_i} - X_{A_j}\rangle_L +
    \langle V,X_{B_j} - X_{A_i}\rangle_L
  \ge 0,
$$
a contradiction (the last inequality follows from $A_j\subseteq B_i$
and $A_i\subseteq B_j$, and Lemma~\ref{lem:cone}).

Now suppose $i\in A\setminus B$.  Then $A_i\subseteq A$ and 
$B\subseteq B_i$, so
$$
\langle V_i,X_B - X_A\rangle_L 
 =
\langle V_i,X_{B_i} - X_{A_i}\rangle_L -
\langle V_i,X_{B_i} - X_B\rangle_L -
\langle V_i,X_A - X_{A_i}\rangle_L
 <
0,
$$
since the first term is negative by~\eqref{eq:witnessneg} and the two
subtracted terms are nonnegative by Lemma~\ref{lem:cone}.

Finally, since the witnesses are distinct, Lemma~\ref{lem:center} 
applies, and the Lorentz transformation $\Lambda$ that makes the sum 
of the $v'_i$ vanish, composed with $X$, defines $X' = \Lambda X$,
which has the same causality relations.  In particular,
$$
\langle V'_i,X'_B - X'_A\rangle_L 
 = \frac{1}{\omega_i}\langle V_i,X_B - X_A\rangle_L
 < 0,
$$
since $\omega_i > 0$ by Lemma~\ref{lem:center}.
\end{proof}

\section{Local currents for $\mathcal{B}_n$ and their sums}
\label{sec:LocalcurrentsforBn}

Now drop the primes from the witnesses and the embedding we found in 
Lemma~\ref{lem:witnesses} and write $V_i = (1,v_i)$ with 
$v_i\in\RR^{d-1}$ and $i\in[n]$.  Using the notation from the proof of 
the lemma, by the witness inequalities and Stokes' 
theorem~(\ref{eq:Stokestheorem},\ref{eq:boundaryterms}), it would 
suffice to find a conserved future-directed causal current $F$ on the 
nontrivial layers of $\mathcal{B}_n$, with
\begin{equation}\label{eq:certificate}
  \partial F
  =
  \sum_{i=1}^n\lambda_i V_i\bigl([B_i]-[A_i]\bigr),
\end{equation}
with $\lambda_i > 0$.  In this case the total current leaving $A_i$ is 
null, so by Lemma~\ref{lem:cone} every outgoing current must lie on 
the null ray through $V_i$.  Thus we would have 
$F_{A_i,\{i,j\}} = a_{ij} V_i$, with $a_{ij}\ge 0$.  Then the current 
entering a pair $\{i,j\}$ is $a_{ij}V_i + a_{ji}V_j$.  When both 
coefficients are positive, the current is timelike and can split into 
causal currents with different directions as it leaves the pair.  The 
same null-ray rigidity holds for currents entering the co-atoms $B_i$.  
Removing these outer layers reduces the problem to one on the middle
layers, with boundary fluxes at pairs and co-pairs and conservation 
required at elements of the other layers.  Since we want to consider
at least all the middle layers in $\mathcal{B}_6$, the rest of this  
discussion motivating the choice of local currents presumes $n\ge 6$.

Complementation exchanges the prescribed sources and sinks.  Given a
current $F$, averaging $F_{AC}$ with $F_{C^{\text{c}}A^{\text{c}}}$ on 
each cover $A\link C$, where complements are taken in $[n]$, preserves 
the boundary and makes the current invariant under complementation.  
The flux supplied at each pair must then equal the flux removed at
its complementary co-pair.  A natural choice is equal coefficients
$a_{ij} = a_{ji} > 0$, so that this flux is a positive multiple of
$V_i + V_j$.

Next, each triple is linked to three pairs, so we can label the 
corresponding local currents $U_{ij,k}$, and assume they are symmetric 
in $i$ and $j$.  The same null rigidity hints at a possible form for 
these currents:  in the limiting case $v_i = v_j$ the flux leaving the 
pair would be null so each of the local currents $U_{ij,k}$ would have
to lie on the common null ray.  If we set 
\begin{equation}\label{eq:nmdef}
  n_{ij} = v_i\cdot v_j
  \quad\text{and}\quad
  m_{ij} = 1 - n_{ij} 
         = \langle V_i,V_j\rangle_L 
         = \frac12\|v_i - v_j\|^2
         \ge 0,
\end{equation}
this suggests that the coefficient of $v_k$ in $U_{ij,k}$ be 
proportional to $m_{ij}$ so that it would vanish were $v_i$ and $v_j$ 
equal.  Guessing that the coefficients of $v_i$ and $v_j$ are 
analogously proportional to $m_{jk}$ and $m_{ik}$, respectively, leads 
to the \textit{ansatz}:
\begin{equation}\label{eq:Uguess}
U_{ij,k}
 =
\bigl(\alpha m_{ij} + \beta(m_{ik}+m_{jk}),
      m_{jk}v_i + m_{ik}v_j + \gamma m_{ij}v_k
\bigr),
\end{equation}
where we have fixed an overall scale.  

If the current is to be conserved at triples, the sum of the three 
incoming currents must be equal to the sum of the outgoing currents.  
An easy way to simplify this condition is to make the spatial 
component of each sum vanish; it turns out that the remaining 
condition that the temporal components are equal can be met when 
$n = 6$.  The spatial part of $U_{ij,k}+U_{ik,j}+U_{jk,i}$ is
$$
(2 + \gamma)(m_{jk}v_i + m_{ik}v_j + m_{ij}v_k),
$$
so we take $\gamma = -2$ to make it $0$.

With this choice, the flux from the pair $\{i,j\}$ is
$$
\sum_{k\in[n]\setminus\{i,j\}} U_{ij,k}
 = 
\sum_{k\in[n]\setminus\{i,j\}} 
\bigl(\alpha m_{ij} + \beta(m_{ik}+m_{jk}),
      m_{jk}v_i + m_{ik}v_j - 2m_{ij}v_k
\bigr).
$$
To simplify this, recall that the $v_i$ sum to $0$, so
\begin{equation}\label{eq:summik}
  \sum_{k\in[n]\setminus\{i,j\}}m_{ik}
  =
  (n-2) - v_i\cdot\sum_{k\in[n]\setminus\{i,j\}}v_k
  =
  n-1 + v_i\cdot v_j
  =
  n - m_{ij}.
\end{equation}
Thus
\begin{equation}\label{eq:Uijksumalphabeta}
  \sum_{k\in[n]\setminus\{i,j\}} U_{ij,k}
  =
  \Bigl(2n\beta+\bigl((n-2)\alpha-2\beta\bigr)m_{ij},
        (n + m_{ij})(v_i + v_j)
  \Bigr).
\end{equation}
To make this sum a positive multiple of $V_i + V_j$, namely 
$(n + m_{ij})(V_i + V_j)$, we choose $\beta = 1$ and 
$\alpha = 4/(n-2)$.  In the case $n = 6$, this means $\alpha = 1$, so
that is how we will define $U_{ij,k}$.  It remains to show that these
local currents are future-directed and causal.  The following lemma 
establishes this for arbitrary triples of unit vectors.

\begin{lemma}\label{lem:triples}
For any three unit vectors $v_i,v_j,v_k\in\RR^{d-1}$, define
\begin{equation}\label{eq:Uijk}
  U_{ij,k} 
  = 
  (m_{ij} + m_{ik} + m_{jk},m_{jk}v_i + m_{ik}v_j - 2m_{ij}v_k).
\end{equation}
Then $U_{ij,k} = U_{ji,k}$ and $U_{ij,k}\in\mathcal{C}^+$.  In 
particular,
\begin{equation}\label{eq:Uijknorm2}
  \langle U_{ij,k},U_{ij,k}\rangle_L 
  =
  \frac32 m_{ij}\|v_i + v_j - 2n_{jk}v_k\|^2
  \ge
  0.
\end{equation}
\end{lemma}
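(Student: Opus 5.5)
The plan is a direct computation in the Gram entries of the three unit vectors; no earlier lemma is needed beyond the definition of $\mathcal{C}^+$. Write $a = n_{ij}$, $b = n_{ik}$, $c = n_{jk}$, so that $m_{ij} = 1-a$, $m_{ik} = 1-b$, $m_{jk} = 1-c$, and each lies in $[0,2]$.

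\emph{Symmetry.} The claim $U_{ij,k} = U_{ji,k}$ is immediate, since the $m$'s are symmetric in their indices. Swapping $i$ and $j$ in~\eqref{eq:Uijk} exchanges the two terms $m_{jk}v_i$ and $m_{ik}v_j$ and fixes everything else.

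\emph{Norm identity.} Write $T = m_{ij}+m_{ik}+m_{jk}$ and $S = m_{jk}v_i + m_{ik}v_j - 2m_{ij}v_k$. The steps are:
\begin{itemize}
\item Expand $\|S\|^2$ using $\|v_\ell\|=1$ and the Gram entries:
$$
\|S\|^2 = m_{jk}^2 + m_{ik}^2 + 4m_{ij}^2 + 2a\,m_{ik}m_{jk} - 4b\,m_{ij}m_{jk} - 4c\,m_{ij}m_{ik}.
$$
\item Subtract this from $T^2$, rewriting every $m$ in terms of $a,b,c$, to get a polynomial in $a,b,c$.
\item Expand the right-hand side of~\eqref{eq:Uijknorm2}. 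Since $\|v_i+v_j-2c\,v_k\|^2 = 2+2a+4c^2-4bc-4c^2 = 2+2a-4bc$, it equals
$$
3(1-a)(1+a-2bc).
$$
This is visibly symmetric in $b$ and $c$, which is consistent with the symmetry in $i,j$ already established.
\end{itemize}
It remains to check that $T^2 - \|S\|^2$ and $3(1-a)(1+a-2bc)$ are the same polynomial. A useful shortcut is to substitute $x = m_{ij}$, $y = m_{ik}$, $z = m_{jk}$ and note that $1+a-2bc = 2 - x - 2(1-y)(1-z)$; both sides then become polynomials in $x,y,z$ and can be compared coefficient by coefficient.

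\emph{Membership in the cone.} Each $m$ is nonnegative, so $T \ge 0$. The identity gives $T^2 \ge \|S\|^2$, because $m_{ij}\ge 0$ and it multiplies a squared norm. Together these give $T \ge \|S\|$, which is exactly $U_{ij,k}\in\mathcal{C}^+$. In the degenerate case $T = 0$, all three $m$'s vanish, so $S = 0$ and $U_{ij,k} = 0 \in \mathcal{C}^+$.

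The main obstacle is purely the bookkeeping in the polynomial identity for the norm. I expect it to be a short expansion once everything is expressed in $x,y,z$. If the comparison fails, I would first recheck the cross terms in $\|S\|^2$, in particular the coefficient $-2$ on $v_k$, which contributes the factors of $4$ above.
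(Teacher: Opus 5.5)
Your proposal is correct and is essentially the paper's proof: a direct expansion in the Gram entries, carried out in your variables $x=m_{ij}$, $y=m_{ik}$, $z=m_{jk}$, followed by the same cone argument. The check you left open does close: $\|S\|^2=(y+z-2x)^2+6xyz$, so $T^2-\|S\|^2$ is a difference of squares equal to $3x(2y+2z-x-2yz)=3m_{ij}(1+n_{ij}-2n_{ik}n_{jk})$, which matches your expansion of the right-hand side.
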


\begin{proof}
That $U_{ij,k}$ is symmetric in $i$ and $j$ follows immediately from 
its definition~\eqref{eq:Uijk}; its time component is nonnegative
because the $v_i$ are unit vectors.  Write $a = m_{ik}$, $b = m_{jk}$, 
and $c = m_{ij}$.  Using $v_i\cdot v_k = 1 - a$, 
$v_j\cdot v_k = 1 - b$, and $v_i\cdot v_j = 1 - c$, we compute
\begin{align*}
  \langle U_{ij,k},U_{ij,k}\rangle_L
  &=
  (a+b+c)^2 - (bv_i + av_j - 2cv_k)\cdot(bv_i + av_j - 2cv_k)       \\
  &=
  (a+b+c)^2 - (a + b - 2c)^2 - 6abc                                 \\
  &=
  3c(2a + 2b - c - 2ab)                                             \\
  &=
  3m_{ij}(1 + n_{ij} - 2n_{ik}n_{jk}).
\end{align*}
Also,
$$
\|v_i + v_j - 2n_{jk}v_k\|^2 
 =
2(1 + n_{ij} - 2n_{ik}n_{jk}),
$$
which proves~\eqref{eq:Uijknorm2} and thus that 
$U_{ij,k}\in\mathcal{C}^+$.
\end{proof}

As we described in Section~\ref{sec:Conceptualframework}, and used to
determine the form of the $U_{ij,k}$, we will want the current to be 
conserved at interior elements, so it is useful to record some 
identities for sums of local currents.  For 
$S\subseteq [n]$, let $S^{\text{c}} = [n]\setminus S$ and define
$$
v_S = \sum_{i\in S} v_i
 \quad\text{and}\quad
m_S = \sum_{\{i,j\}\subseteq S} m_{ij}.
$$

\begin{lemma}\label{lem:DTsums}
Let $n\ge2$ and let $v_1,\ldots,v_n\in\RR^{d-1}$ be unit vectors with
$\sum_{i=1}^n v_i = 0$.  For every pair $D = \{i,j\}\subseteq[n]$,
\begin{equation}\label{eq:sumkUijk}
  \sum_{k\in[n]\setminus D} U_{ij,k}
  =
  \bigl(2n + (n-4)m_{ij},(n + m_{ij})(v_i + v_j)\bigr)
  =
  (n + m_{ij})(V_i + V_j) + (n-6)m_{ij}(1,0).
\end{equation}
For every triple $T = \{i,j,k\}\subseteq[n]$,
\begin{equation}\label{eq:3Usum}
  U_{ij,k} + U_{ik,j} + U_{jk,i} = 3 m_T(1,0).
\end{equation}
For every subset $S\subseteq[n]$,
\begin{equation}\label{eq:sumofm}
  m_S = \frac12(|S|^2 - \|v_S\|^2),
\end{equation}
and
\begin{equation}\label{eq:diffofm}
  m_S - m_{S^{\text{c}}} = \frac12 n(2|S| - n).
\end{equation}
\end{lemma}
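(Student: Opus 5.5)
All four identities are direct computations from the definition \eqref{eq:Uijk}, the definitions of $m_{ij}$ and $n_{ij}$ in \eqref{eq:nmdef}, and the hypothesis $\sum_i v_i = 0$. I would prove them in the order stated, but establish \eqref{eq:sumofm} first, since it is the cleanest and clarifies the others.

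\textbf{Step 1: \eqref{eq:sumofm}.} Expand the squared norm:
\[
\|v_S\|^2 = \sum_{i\in S}\|v_i\|^2 + 2\sum_{\{i,j\}\subseteq S} n_{ij} = |S| + 2\sum_{\{i,j\}\subseteq S}(1-m_{ij}).
\]
This equals $|S| + |S|(|S|-1) - 2m_S = |S|^2 - 2m_S$, which rearranges to \eqref{eq:sumofm}.

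\textbf{Step 2: \eqref{eq:diffofm}.} Since $\sum_i v_i = 0$, we have $v_{S^{\text{c}}} = -v_S$, so $\|v_{S^{\text{c}}}\|^2 = \|v_S\|^2$. Subtracting the two instances of \eqref{eq:sumofm} gives
\[
m_S - m_{S^{\text{c}}} = \tfrac12\bigl(|S|^2 - (n-|S|)^2\bigr) = \tfrac12 n(2|S|-n).
\]

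\textbf{Step 3: \eqref{eq:3Usum}.} Each time component equals $m_{ij}+m_{ik}+m_{jk} = m_T$, so the three time components add to $3m_T$. The spatial parts collect as
\[
(2+\gamma)(m_{jk}v_i + m_{ik}v_j + m_{ij}v_k) \quad\text{with } \gamma = -2,
\]
exactly as in the motivating discussion. I would verify this by listing the coefficient of each of $v_i$, $v_j$, $v_k$ across the three currents. For example, $v_i$ appears with coefficient $m_{jk}$ in $U_{ij,k}$, with coefficient $m_{jk}$ in $U_{ik,j}$, and with coefficient $-2m_{jk}$ in $U_{jk,i}$, and these sum to $0$.

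\textbf{Step 4: \eqref{eq:sumkUijk}.} This is the computation leading to \eqref{eq:Uijksumalphabeta}, specialized to $\alpha = \beta = 1$.
\begin{itemize}
\item \emph{Time component.} Use \eqref{eq:summik}, which relies on $\sum_i v_i = 0$: the time part is $(n-2)m_{ij} + (n-m_{ij}) + (n-m_{jk}\text{-analogue})$. More precisely, $\sum_k (m_{ik}+m_{jk}) = 2(n-m_{ij})$, so the time part is $(n-2)m_{ij} + 2n - 2m_{ij} = 2n + (n-4)m_{ij}$.
\item \emph{Spatial component.} This is $\bigl(\sum_k m_{jk}\bigr)v_i + \bigl(\sum_k m_{ik}\bigr)v_j - 2m_{ij}\sum_k v_k$. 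Here $\sum_k m_{jk} = \sum_k m_{ik} = n - m_{ij}$, and $\sum_{k\notin D} v_k = -(v_i+v_j)$. The total is therefore $(n - m_{ij} + 2m_{ij})(v_i+v_j) = (n+m_{ij})(v_i+v_j)$.
\item \emph{Second form.} Since $(n+m_{ij})(V_i+V_j)$ has time component $2n + 2m_{ij}$, the remainder in the time component is $(n-6)m_{ij}$, as claimed.
\end{itemize}

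\textbf{Main obstacle.} Nothing is conceptually hard here. The only care needed is bookkeeping: in \eqref{eq:summik} the sum runs over $k\notin\{i,j\}$, and one must use $\|v_i\|=1$ when expanding. Step 3's coefficient tracking is the most error-prone part.
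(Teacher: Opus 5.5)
Your proposal is correct and is essentially the paper's own argument. The paper likewise gets \eqref{eq:sumkUijk} by specializing \eqref{eq:Uijksumalphabeta}, which rests on \eqref{eq:summik}, to $\alpha=\beta=1$. It proves \eqref{eq:3Usum} by noting that the time components are permutation-invariant while the spatial parts cancel. It derives \eqref{eq:sumofm} and then \eqref{eq:diffofm} from the same expansion of $m_S$ in terms of $\|v_S\|^2$, together with $v_{S^{\text{c}}}=-v_S$. The only blemish is your garbled first attempt at the time component in Step 4; it is harmless, since your ``more precisely'' line gives the correct $(n-2)m_{ij}+2(n-m_{ij})$.
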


\begin{proof}
Each of these identities is a straightforward calculation.  Setting
$\alpha = \beta = 1$ in~\eqref{eq:Uijksumalphabeta} 
gives~\eqref{eq:sumkUijk}.

Noticing that the time component of $U_{ij,k}$ is invariant under 
permutations of $T$ while the spatial components cancel in this sum
proves~\eqref{eq:3Usum}.

Finally,
$$
m_S = \sum_{\{i,j\}\subseteq S}(1 - v_i\cdot v_j)
    = \frac12|S|(|S|-1) - \frac12\|v_S\|^2 + \frac12|S|,
$$
which proves~\eqref{eq:sumofm}, and then
$$
m_S - m_{S^{\text{c}}} 
 =
\frac12\bigl(|S|^2 - \|v_S\|^2\bigr) - 
\frac12\bigl((n-|S|)^2 - \|v_{S^{\text{c}}}\|^2\bigr),
$$
which becomes~\eqref{eq:diffofm} because $\sum_i v_i = 0$ implies
$\|v_S\|^2 = \|v_{S^{\text{c}}}\|^2$.
\end{proof}

Not surprisingly, since it motivated the choice of $U_{ij,k}$, $n = 6$ 
is a special value for these identities.  For $\mathcal{B}_6$, the 
last term in~\eqref{eq:sumkUijk} vanishes; also the 
identity~\eqref{eq:diffofm} implies that $m_S = m_{S^{\text{c}}}$ for 
all triples $S$.

\section{$\mathcal{B}_6$ is not a sphere order}
\label{sec:B6isnotasphereorder}

We are now ready to state and prove the main theorem.

\begin{theorem}\label{thm:maintheorem}
The Boolean lattice $\mathcal{B}_6$ is not a sphere order in any 
dimension.
\end{theorem}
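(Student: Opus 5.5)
We argue by contradiction. Suppose $\mathcal{B}_6$ is a sphere order in some dimension, so that it embeds in some $\MM^d$. By Lemma~\ref{lem:witnesses} we may assume the embedding $X$ comes with null witnesses $V_i=(1,v_i)$, $i\in[6]$, satisfying $\sum_i v_i=0$ and $\langle V_i,X_{B_i}-X_{A_i}\rangle_L<0$, where $A_i=\{i\}$ and $B_i=[6]\setminus\{i\}$. The plan is to write down an explicit future-directed causal current $F$ on the Hasse diagram of $\mathcal{B}_6^{\circ}$, which links layers $1$ through $5$. This current should be conserved at every element of layers $2,3,4$ and have boundary of the form~\eqref{eq:certificate}. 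Stokes' theorem~\eqref{eq:boundaryterms} then yields the contradiction $0\le\langle F,\mathrm{d}X\rangle=\sum_i\lambda_i\langle V_i,X_{B_i}-X_{A_i}\rangle_L<0$. Here the left-hand inequality holds because every $F_{AB}$ and every $X_B-X_A$ on a link lies in $\mathcal{C}^+$, by Lemma~\ref{lem:cone}.

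The current is built layer by layer, following the motivating discussion of Section~\ref{sec:LocalcurrentsforBn}, and made invariant under complementation.
\begin{itemize}
\item On atom-to-pair links, set $F_{A_i,\{i,j\}}=(6+m_{ij})V_i$.
\item On pair-to-triple links, set $F_{\{i,j\},\{i,j,k\}}=U_{ij,k}$.
\item On triple-to-co-pair links, use the complementary current. For a triple $T$ and $l\notin T$, write $T^{\text{c}}=\{l\}\cup P$ with $P$ a pair. Since $(T\cup\{l\})^{\text{c}}=P$, the link $T\link T\cup\{l\}$ is the complement of the link $P\link T^{\text{c}}$, so we set $F_{T,T\cup\{l\}}=U_{P,l}$.
\item On co-pair-to-co-atom links, set $F_{[6]\setminus\{i,j\},B_i}=(6+m_{ij})V_i$. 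This is the complement of the link $A_i\link\{i,j\}$.
\end{itemize}
Every component is future-directed causal: the $V_i$ are null, and the $U$'s lie in $\mathcal{C}^+$ by Lemma~\ref{lem:triples}.

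Next we check conservation using Lemma~\ref{lem:DTsums} with $n=6$.
\begin{itemize}
\item At a pair $\{i,j\}$, the incoming current is $(6+m_{ij})(V_i+V_j)$. By~\eqref{eq:sumkUijk}, with the $(n-6)$ term vanishing, this equals the outgoing $\sum_k U_{ij,k}$.
\item At a triple $T$, the incoming sum is $3m_T(1,0)$ by~\eqref{eq:3Usum}. The outgoing sum is the same kind of sum for $T^{\text{c}}$, namely $3m_{T^{\text{c}}}(1,0)$. These agree because~\eqref{eq:diffofm} gives $m_T=m_{T^{\text{c}}}$ when $|T|=3$ and $n=6$.
\item At co-pairs, conservation follows from the pair computation by complementation symmetry.
\end{itemize}

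Finally we compute the boundary. At $A_i$ the outflow is $\sum_{j\ne i}(6+m_{ij})V_i$. Since $\sum_{j\ne i}m_{ij}=5-v_i\cdot\sum_{j\ne i}v_j=6$, this equals $36V_i$. Likewise, the inflow at $B_i$ is $36V_i$. Hence $\partial F=\sum_i 36V_i([B_i]-[A_i])$, which is~\eqref{eq:certificate} with $\lambda_i=36>0$, and the contradiction follows.

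The only delicate step is the complementation bookkeeping at triples, that is, identifying the outgoing sum at $T$ with the incoming sum at $T^{\text{c}}$. This relies on the special $n=6$ identity $m_T=m_{T^{\text{c}}}$. All other steps are direct applications of the earlier lemmas.
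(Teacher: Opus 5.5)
Your proposal is correct and is essentially the paper's argument. It is precisely the extended five-layer certificate $\partial F = 36\sum_i V_i\bigl([B_i]-[A_i]\bigr)$ that the paper gives immediately after its proof, built from the same $U_{ij,k}$, the same complementation symmetry, and the same $n=6$ identities of Lemma~\ref{lem:DTsums}. The paper's formal proof differs only cosmetically: it restricts to the middle three layers and uses the witness inequalities $\langle V_i, X_{D^{\text{c}}}-X_D\rangle_L<0$ for pairs $D\ni i$ as boundary data, instead of adding the null currents at atoms and co-atoms.
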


\begin{proof}
Following the discussion at the beginning of 
Section~\ref{sec:EmbeddingconstraintsforBn} and then
Lemma~\ref{lem:witnesses}, if $\mathcal{B}_6$ has a sphere 
representation, each of the cover differences lies in $\mathcal{C}^+$ 
and there are $V_i = (1,v_i)$ for $i\in[6]$ with 
$\sum_{i\in[6]} v_i = 0$ such that for all $D\subset[6]$ with 
$|D| = 2$ and $i\in D$,
\begin{equation}\label{eq:witness}
  \langle V_i,X_{D^{\text{c}}} - X_D\rangle_L < 0.
\end{equation}
With these witnesses fixed, it suffices to consider only the ``middle'' 
subposet $M\subset\mathcal{B}_6$ consisting of the 2, 3, and 4 element 
subsets of $[6]$, so using these $v_i$, define $U_{ij,k}$ according 
to~\eqref{eq:Uijk}, and for each $D = \{i,j\}\subset\{i,j,k\} = T$, 
define local currents
\begin{equation}\label{eq:localcurrentsdef}
  F_{DT} = U_{ij,k} = F_{T^{\text{c}}D^{\text{c}}}.
\end{equation}
These are in $\mathcal{C}^+$ by Lemma~\ref{lem:triples}, so for each 
of the cover relations $D\link T$ and $T^{\text{c}}\link D^{\text{c}}$ 
the current has nonnegative Lorentz inner product with the difference 
vector, be it $X_T - X_D$ or $X_{D^{\text{c}}} - X_{T^{\text{c}}}$.

Furthermore, this current is conserved:  For $T = \{i,j,k\}$,
$$
(\partial F)_T 
 =
\sum_{D\subset T}F_{DT} - \sum_{T\subset Q}F_{TQ},
$$
where $|D| = 2$ and $|Q| = 4$.  By~\eqref{eq:3Usum}, the incoming 
current is $3m_T(1,0)$.  The complementarity of the local currents 
defined in~\eqref{eq:localcurrentsdef} means that the outgoing
current is the same as incoming current to $T^{\text{c}}$, which 
by~\eqref{eq:diffofm}, is also $3m_T(1,0)$.  

Thus, since the local currents $F_{AB}$ and the cover differences are 
all in $\mathcal{C}^+$, we have
\begin{align}
  0 &\le \sum_{A\link B\in M}\langle F_{AB},X_B - X_A\rangle_L\notag\\
    &=   \sum_{\substack{D\in M\\ |D|=2}} 
           \langle(\partial F)_D,X_D\rangle_L +
         \sum_{\substack{Q\in M\\ |Q|=4}} 
           \langle(\partial F)_Q,X_Q\rangle_L                 \notag\\
    &=  -\sum_{\{i,j\}\subset[6]} 
         (6 + m_{ij})\langle V_i + V_j,X_{\{i,j\}}\rangle_L 
        +\sum_{\{i,j\}\subset[6]}
         (6 + m_{ij})\langle V_i + V_j,X_{\{i,j\}^{\text{c}}}\rangle_L,
                                                \label{eq:nonnegative}
\end{align}
where the first equality uses Stokes' 
theorem~(\ref{eq:Stokestheorem},\ref{eq:boundaryterms}) and the 
conservation of the current, and the second equality follows 
from~\eqref{eq:sumkUijk} and the complementarity of the 
current~\eqref{eq:localcurrentsdef}.  Rearranging the sums 
in~\eqref{eq:nonnegative}, we have
$$
0 
 \le 
\sum_{D = \{i,j\}\subset[6]}
(6 + m_{ij})\bigl(\langle V_i,X_{D^{\text{c}}} - X_D\rangle_L +
                  \langle V_j,X_{D^{\text{c}}} - X_D\rangle_L
            \bigr).
$$
But each term in the sum is negative, because $6+m_{ij} > 0$ from 
the definition~\eqref{eq:nmdef} and $V_i$ and $V_j$ witness the 
non-relation between $D = \{i,j\}$ and 
$D^{\text{c}}$~\eqref{eq:witness}.  This contradicts the existence of
an embedding of $\mathcal{B}_6$ in Minkowski space, and thus also any 
realization as a sphere order.
\end{proof}

In fact, the current in the proof extends to all five nontrivial 
layers, giving the certificate~\eqref{eq:certificate} that motivated 
its construction.  For each pair $D=\{i,j\}$, extend $F$ by setting
$$
F_{A_iD} = F_{D^{\text{c}}B_i} = (6 + m_{ij})V_i,
$$
and similarly with $i$ and $j$ exchanged.  These additional local 
currents are future-directed and null.  At $D$, the two incoming 
currents sum to $(6 + m_{ij})(V_i + V_j)$, which equals the outgoing 
current by~\eqref{eq:sumkUijk}.  Complementation gives conservation at 
$D^{\text{c}}$ as well.  Together with conservation at triples, this 
shows that the extended current is conserved at every element of ranks 
$2$, $3$, and $4$.

Since 
$$
\sum_{j\ne i} m_{ij} = 5 - v_i\cdot\sum_{j\ne i} v_j = 6,
$$
the total current leaving $A_i$, and entering $B_i$, is
$$
\sum_{j\ne i}(6 + m_{ij})V_i = 36V_i.
$$
Thus the certificate~\eqref{eq:certificate} becomes
$$
\partial F = 36\sum_{i=1}^6 V_i\bigl([B_i] - [A_i]\bigr).
$$
For a putative embedding, Stokes' theorem therefore gives the
contradiction
$$
0
 \le
\sum_{\substack{A\link B\\A,B\in\mathcal{B}_6^{\circ}}}
  \langle F_{AB},X_B - X_A\rangle_L
 =
36\sum_{i=1}^6\langle V_i,X_{B_i} - X_{A_i}\rangle_L
 <
0.
$$

Restricting the certificate to the middle three layers eliminates
the outer null currents while retaining their fluxes as boundary
data.  Also, since the constructions of the witnesses in 
Lemma~\ref{lem:witnesses} and this extended current use neither 
$\emptyset$ nor $[6]$, the same argument proves that the 62-element 
poset $\mathcal{B}_6^{\circ}$ is not a sphere order.

\section{Spherical configurations}
\label{sec:sphericalconfigurations}

El-Zahar and Fateen~\cite{ElZaharFateen1998} conjectured the 
nonexistence of a configuration of $2^6-2$ points on $\SSS^4$ 
satisfying antipodality conditions and a weak upper bound of $\pi$ on 
the length of every maximal chain.  They observed that this 
nonexistence would imply that $\mathcal{B}_6$ is not a sphere order.  
The same Minkowski space geometry and conserved current we have used 
to prove Theorem~\ref{thm:maintheorem} also establish their 
conjecture, in fact on a unit sphere of any finite dimension:

\begin{proposition}\label{prop:ElZaharFateen}
Let $d\ge 3$, and assign distinct points $K(A)$ of the unit sphere in
$\RR^{d-1}$ to the elements $A\in\mathcal{B}_6^\circ$.  Suppose that
$$
K(B_i) = -K(A_i),
$$
for $i\in[6]$, again using the notation of Lemma~\ref{lem:witnesses}.
Write $\phi(A,B) = \arccos\bigl(K(A)\cdot K(B)\bigr)$ for the
spherical distance between $K(A)$ and $K(B)$.  Then there is a maximal 
chain
$$
C_1\link C_2\link C_3\link C_4\link C_5
\in\mathcal{B}_6^{\circ},
$$
with $|C_r| = r$, such that
$$
\sum_{r=1}^4\phi(C_r,C_{r+1}) > \pi.
$$
\end{proposition}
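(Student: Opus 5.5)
We argue by contradiction. Assume every maximal chain of $\mathcal{B}_6^{\circ}$ has total spherical length at most $\pi$. The plan is to turn the configuration $K$ into a map of $\mathcal{B}_6^{\circ}$ into a Lorentzian space that satisfies all the cover constraints and, at the pairs $(A_i,B_i)$, weak versions of the witness inequalities. We then feed this map to the conserved current $F$ constructed after the proof of Theorem~\ref{thm:maintheorem}. That current has $\partial F=36\sum_i V_i([B_i]-[A_i])$.

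\emph{Step 1 (time function).} For each $A\in\mathcal{B}_6^{\circ}$ let $t_A$ be the maximum, over saturated chains $C_1\link\cdots\link C_r=A$ starting at an atom, of $\sum_s\phi(C_s,C_{s+1})$; set $t_{A_i}=0$. Then for every cover $A\link B$ we have $t_B-t_A\ge\phi(A,B)$. Under the hypothesis, every co-atom satisfies $t_{B_i}\le\pi$, so $t_{B_i}-t_{A_i}\le\pi=\phi(A_i,B_i)$, using $K(B_i)=-K(A_i)$.

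\emph{Step 2 (Lorentzian realization).} Regard $(t_A,K(A))$ as points of the Einstein cylinder $\RR\times\SSS^{d-2}$. There $(t,p)$ causally precedes $(s,q)$ iff $s-t\ge\phi(p,q)$, so every cover becomes a causal relation. Each pair $(A_i,B_i)$ is at best null-related, since its time separation is at most $\pi$ and equals the distance between antipodes. All points lie in the slab $0\le t\le\pi$. This slab is conformal to a region of $\MM^{d-1}$ via the standard Penrose compactification (or, more concretely, the cone over the sphere, $(t,p)\mapsto$ suitable null coordinates). Conformal maps preserve the causal relation, so we obtain $X:\mathcal{B}_6^{\circ}\to\MM^{d-1}$ with $X_B-X_A\in\mathcal{C}^+$ on every cover. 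Moreover, for each $i$ there is a null vector $V_i$ along the null geodesic from $X_{A_i}$ that satisfies $\langle V_i,X_{B_i}-X_{A_i}\rangle_L\le 0$. These $V_i$ are distinct because the $K(A_i)$ are distinct. By Lemma~\ref{lem:center}, after a Lorentz transformation and rescaling we may assume $\sum v_i=0$, exactly as in Lemma~\ref{lem:witnesses}. The non-strict inequalities are preserved by the positive rescalings.

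\emph{Step 3 (the certificate).} Build $U_{ij,k}$ from these $v_i$ and extend $F$ to all five layers as in the text. Stokes' theorem then gives
$$0\le\sum_{A\link B}\langle F_{AB},X_B-X_A\rangle_L=36\sum_i\langle V_i,X_{B_i}-X_{A_i}\rangle_L\le 0.$$
So every term on the left must vanish. In particular every outgoing current from $A_i$, namely $(6+m_{ij})V_i$, is Lorentz-orthogonal to the cover difference $X_{\{i,j\}}-X_{A_i}$. By Lemma~\ref{lem:cone} this forces $X_{\{i,j\}}-X_{A_i}$ onto the null ray of $V_i$.

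Applying the same rigidity to $X_{\{i,j\}}-X_{A_j}$ shows it lies on the null ray of $V_j$. Propagating this through the middle layers, using the timelike currents $U_{ij,k}$ (nonzero whenever $m_{ij}>0$) and the second part of Lemma~\ref{lem:cone}, forces some cover difference to vanish. That contradicts the distinctness of the points $K(A)$. Hence the equality case is impossible, so some maximal chain has length $>\pi$.

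\emph{Main obstacle.} The hard step is this last one: upgrading the non-strict inequality to a contradiction. In Theorem~\ref{thm:maintheorem} strictness came for free from the non-relations, whereas here only $\le\pi$ is assumed. If the rigidity argument is awkward, the fallback is a perturbation. Suppose all chains have length $\le\pi$. Shrink the configuration slightly toward the antipodal structure, or equivalently rescale time by $1-\epsilon$ while keeping the links causal; this needs strict slack in $t_B-t_A\ge\phi$ on at least one cover per chain. It should come from chord-versus-arc strictness in the conformal picture for distinct non-antipodal points. With that slack, the witness inequalities become strict and Theorem~\ref{thm:maintheorem}'s argument applies verbatim. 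A secondary technical point is making Step 2's conformal passage from the Einstein cylinder slab to Minkowski space explicit. One also has to check that the endpoints at time separation exactly $\pi$, which sit on the conformal boundary, are handled by working in a slightly larger slab.
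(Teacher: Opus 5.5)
Your outline matches the paper's: max-chain heights, a Lorentzian lift, normalization by Lemma~\ref{lem:center}, the conserved current, and an upgrade from weak to strict inequalities. However, Step~2 fails as stated.

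The open slab $0<t<\pi$ of $\RR\times\SSS^{d-2}$ is conformal to de Sitter space, not to a region of $\MM^{d-1}$: it contains the closed spacelike sphere $t=\pi/2$, and no region of Minkowski space does. Its causal realization is the hyperboloid $\langle X,X\rangle_L=-1$ in $\MM^d$, via $X_A=(-\cot t_A,K(A)/\sin t_A)$, and this requires $0<t_A<\pi$. The atoms ($t_{A_i}=0$), and any co-atom with $t_{B_i}=\pi$, lie on the conformal boundary and get no image.

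A ``slightly larger slab'' does not fix this. Such a slab contains antipodal points at time separation $\pi$ joined by more than one null geodesic, which cannot happen in any region of Minkowski space. Moreover, if some maximal chain has length exactly $\pi$ (your hypothesis permits this), the cover inequalities force its endpoints to conformal-time separation at least $\pi$, so they cannot both lie in the available open interval. Your rescaling fallback fails for the same reason: along such a chain every cover is tight, so there is no slack. Consequently the five-layer certificate, whose boundary terms need points $X_{A_i}$ and $X_{B_i}$, cannot be set up this way.

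The missing idea is to lift only the middle layers $M$, where distinctness gives $0<t_A<\pi$, and to encode the outer layers by the null vectors $N_i=(1,-K(A_i))$. The triangle inequality along a chain attaining $t_A$, together with $K(B_i)=-K(A_i)$, gives $\phi(A_i,A)\le t_A$ if $i\in A$ and $t_A\le\phi(A_i,A)$ if $i\notin A$. Equivalently, $\langle N_i,X_A\rangle_L\ge0$, respectively $\le0$. After Lemma~\ref{lem:center}, this yields the weak witnesses $\langle V_i,X_{D^{\text{c}}}-X_D\rangle_L\le0$ for the middle-layer current, whose boundary fluxes sit at pairs and co-pairs.

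Step~3 also has a gap. You need a \emph{timelike} local current, and ``nonzero whenever $m_{ij}>0$'' does not supply one. By~\eqref{eq:Uijknorm2}, $U_{ij,k}$ is null whenever $v_i+v_j=2n_{jk}v_k$, and orthogonality to a nonzero null current only pins a cover difference to a null ray. The propagation you sketch is neither carried out nor needed. Instead, choose $i\ne j$ with $v_i+v_j\ne0$, which is possible because the six $v_i$ are distinct. Then $m_{ij}>0$, and $U_{ij,k}$ can be null only if $v_k$ is parallel to $v_i+v_j$; at most two of the four remaining distinct $v_k$ can be, so some $U_{ij,k}$ is timelike. In the de Sitter lift every cover difference is nonzero, since $\phi(A,B)>0$ forces $t_B>t_A$ and $-\cot$ is strictly increasing. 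So Lemma~\ref{lem:cone} makes that term of the Stokes sum strictly positive, while all other terms are nonnegative. This contradicts the weak boundary inequality immediately.
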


\begin{proof}
Suppose instead that every maximal chain has total spherical length
at most $\pi$.  For $A\in\mathcal{B}_6^\circ$, let $h_A$ be the
maximum of the sums of spherical distances along saturated chains
from a singleton to $A$, with $h_{A_i} = 0$.  Then
\begin{equation}\label{eq:EZFheights}
  A\link B
  \quad\Longrightarrow\quad
  h_B\ge h_A + \phi(A,B).
\end{equation}
Let $M$ again denote the subposet consisting of the subsets of
cardinalities $2$, $3$, and $4$.  Since the points $K(A)$ are
distinct, every cover has positive spherical length, so $h_A > 0$ for
$A\in M$.  Extending a chain attaining $h_A$ to a co-atom adds a
positive length; hence $h_A < \pi$ for $A\in M$.

Let $p_i = K(A_i)$.  If $i\in A$, a saturated chain from $A_i$ to $A$
and the triangle inequality give $\phi(A_i,A)\le h_A$.  If
$i\notin A$, extend a chain attaining $h_A$ to $B_i$.  The bound on
maximal chain lengths and the triangle inequality give
$h_A+\phi(A,B_i)\le\pi$.  Antipodality implies 
$\phi(A,B_i) = \pi - \phi(A_i,A)$, so in this case 
$h_A\le \phi(A_i,A)$.  Taking cosines, for every $A\in M$ we have
\begin{equation}\label{eq:EZFpointsigns}
  p_i\cdot K(A)
    \begin{cases}
      \ge\cos h_A & \text{if\ }i\in A;                             \\
      \le\cos h_A & \text{if\ }i\notin A.
    \end{cases}
\end{equation}

Using the image of $\mathcal{B}_6^{\circ}$ in the sphere we can 
construct a map into Minkowski space:  for $A\in M$, define
\begin{equation}\label{eq:EZFlift}
  X_A = \left(-\cot h_A,\frac{K(A)}{\sin h_A}\right)\in\MM^d.
\end{equation}
Formula~\eqref{eq:EZFlift} uses the global conformal parametrization 
of the unit de Sitter hyperboloid $\langle X,X\rangle_L = -1$, with 
induced metric $(\dd h^2 - \dd\Omega^2)/\sin^2 h$, where $\dd\Omega^2$ 
is the round metric on $\SSS^{d-2}$.  The full conformal-time interval 
is $0 < h < \pi$, and the height inequalities~\eqref{eq:EZFheights} 
are precisely the causal-order inequalities in these coordinates.  If 
$A\link B\in M$, we can compute
$$
\langle X_B - X_A,X_B - X_A\rangle_L
 =
\frac{2\bigl(\cos\phi(A,B)-\cos(h_B-h_A)\bigr)}
     {\sin h_A\sin h_B}
\ge0,
$$
where the inequality follows from~\eqref{eq:EZFheights} and
$0 < h_A < h_B < \pi$.  Since $-\cot h$ is strictly increasing on
$(0,\pi)$, the time component of $X_B - X_A$ is positive.  Thus every
cover difference is a nonzero vector in $\mathcal{C}^+$.

The six vectors $N_i = (1,-p_i)$ are distinct future-directed null
vectors.  By~\eqref{eq:EZFpointsigns},
$$
\langle N_i,X_A\rangle_L
  =
\frac{p_i\cdot K(A)-\cos h_A}{\sin h_A}
\begin{cases}
  \ge0 &\text{if\ }i\in A;                                          \\
  \le0 &\text{if\ }i\notin A.
\end{cases}
$$
Apply Lemma~\ref{lem:center} to the $N_i$, and apply the same Lorentz
transformation to all the $X_A$.  Write the normalized null vectors
as $V_i = (1,v_i)$, with $\sum_i v_i = 0$, and retain the notation 
$X_A$ for the transformed vectors.  The $v_i$ remain distinct, the 
cover differences remain nonzero and future-directed causal, and the
positive rescalings of the witnesses preserve the preceding signs.
In particular, for every pair $D\subset[6]$ and every $i\in D$,
\begin{equation}\label{eq:EZFweakwitness}
  \langle V_i,X_{D^{\text{c}}} - X_D\rangle_L\le0.
\end{equation}

Now again define $F$ by~\eqref{eq:localcurrentsdef}.  As in the proof 
of Theorem~\ref{thm:maintheorem}, its local currents lie in
$\mathcal{C}^+$ and~\eqref{eq:nonnegative} holds.  To obtain a 
contradiction from the weak inequalities~\eqref{eq:EZFweakwitness}, we 
show that at least one local current is timelike.  Indeed, choose 
$i\ne j$ with $v_i + v_j\ne 0$;  such a pair exists because the six 
unit vectors are distinct.  Since $m_{ij}>0$, the 
identity~\eqref{eq:Uijknorm2} shows that $U_{ij,k}$ can be null only 
if $v_k$ is parallel to $v_i + v_j$.  At most two of the four distinct 
unit vectors with $k\notin\{i,j\}$ can satisfy this condition.  Thus 
some $U_{ij,k}$ is timelike.

By Lemma~\ref{lem:cone}, at least one term in the Stokes sum
is strictly positive, and all the others are nonnegative.  Stokes' 
theorem, the boundary calculation in~\eqref{eq:nonnegative}, and 
\eqref{eq:EZFweakwitness} now give
\begin{align*}
0
 <
\sum_{\substack{A\link B\\ A,B\in M}}
  \langle F_{AB},X_B - X_A\rangle_L
 =
\sum_{D=\{i,j\}\subseteq[6]}(6+m_{ij})
  \langle V_i + V_j,X_{D^{\text{c}}} - X_D\rangle_L
\le 0,
\end{align*}
which is a contradiction.
\end{proof}

\section{Discussion}
\label{sec:discussion}

Since $\mathcal{B}_6$ is a subposet of $\mathcal{B}_n$ for $n\ge 6$, 
none of the latter are sphere orders.  $\mathcal{B}_5$, however, is a 
sphere order~\cite{Meyer1989,BrightwellWinkler1989,ElZaharFateen1998}, 
so we have found the smallest Boolean lattice that is not a sphere 
order.  Being the product of $6$ nontrivial chains $\mathcal{B}_6$ has 
order dimension $6$~\cite{KellyTrotter1982}, while Felsner, Fishburn 
and Trotter's theorem asserts that there is a finite poset with order 
dimension $3$ that is not a sphere order~\cite{FFT1999}.  Observing 
the cardinality of $\mathcal{B}_6$, knowing that $[4]^3$ is a circle 
order~\cite{ElZaharFateen1998}, and having tried and failed to find 
numerical embeddings for $[5]^3$ and $[3]^4$, we conjecture that  
neither is a sphere order.


\begin{thebibliography}{99}

\bibitem{BLMS1987}
Luca Bombelli, Joohan Lee, David A. Meyer and Rafael D. Sorkin,
``Space-time as a causal set'',
\textsl{Physical Review Letters} \textbf{59} (1987) 521--524.

\bibitem{Meyer1989}
David A. Meyer,
{\sl The Dimension of Causal Sets},
Ph.D.\ thesis (MIT, Mathematics 1989).

\bibitem{BrightwellWinkler1989}
Graham Brightwell and Peter Winkler,
``Sphere orders'',
\textsl{Order} \textbf{6} (1989) 235--240.

\bibitem{Meyer1993}
David A. Meyer,
``Spherical containment and the Minkowski dimension of partial 
  orders'',
\textsl{Order} \textbf{10} (1993) 227--237.
  
\bibitem{FFT1999}
Stefan Felsner, Peter Fishburn and William T. Trotter,
``Finite three dimensional partial orders which are not sphere 
  orders'',
\textsl{Discrete Mathematics} \textbf{201} (1999) 101--132.

\bibitem{Daughton1998}
Alan Daughton,
``An investigation of the symmetric case of when causal sets can embed 
  into manifolds'',
\textsl{Classical and Quantum Gravity} \textbf{15} (1998) 3427--3434.

\bibitem{ElZaharFateen1998}
Mohamed H. El-Zahar and Lobna A. Fateen,
``On sphere orders'',
\textsl{Discrete Mathematics} \textbf{185} (1998) 249--253. 

\bibitem{BoydVandenberghe2004}
Stephen Boyd and Lieven Vandenberghe,
\textsl{Convex Optimization}
(Cambridge:  Cambridge University Press 2004).

\bibitem{DHLM2005}
Mathieu Desbrun, Anil N. Hirani, Melvin Leok and Jerrold E. Marsden,
``Discrete exterior calculus'' (2005),
\href{https://arxiv.org/abs/math/0508341}{\texttt{arXiv:math/0508341}}.

\bibitem{Alexander1935}
James W. Alexander,
``On the chains of a complex and their duals'',
\textsl{Proceedings of the National Academy of Sciences of the
  United States of America} \textbf{21} (1935) 509--511.

\bibitem{deRham1931}
Georges de Rham,
``\textit{Sur l'analysis situs des vari\'et\'es \`a $n$ dimensions}'',
\textsl{Journal de Math\'ematiques Pures et Appliqu\'ees},
  \textit{neuvi\`eme s\'erie} \textbf{10} (1931) 115--200.
  
\bibitem{DouadyEarle1986}
Adrien Douady and Clifford J. Earle,
``Conformally natural extension of homeomorphisms of the circle'', 
\textsl{Acta Mathematica} \textbf{157} (1986) 23--48.

\bibitem{CantarellaSchumacher2022}
Jason Cantarella and Henrik Schumacher,
``Computing the conformal barycenter'',
\textsl{SIAM Journal on Applied Algebra and Geometry} \textbf{6} 
  (2022) 503--530.
  
\bibitem{KapovichMillson1996}
Michael Kapovich and John J. Millson,
``The symplectic geometry of polygons in Euclidean space'',
\textsl{Journal of Differential Geometry} \textbf{44} (1996) 479--513.

\bibitem{KellyTrotter1982}
David Kelly and William T. Trotter, Jr.,
``Dimension theory for ordered sets'' 
in Ivan Rival, ed., 
\textsl{Ordered Sets}, NATO ASI Series C \textbf{83} 
(Dordrecht:  D. Reidel 1982) 171--211.

\end{thebibliography}
\end{document}